\newcommand{\esssup}{\mathop{\rm ess\ sup}}
\newcommand{\p}{\partial}
\newcommand{\e}{\varepsilon}
\newcommand{\R}{{\mathbb R}}
\newcommand{\Z}{{\mathbb Z}}
\newcommand{\T}{{\mathbb T}}
\newcommand{\aA}{{\cal A}}
\newcommand{\DD}{{\cal D}}
\newcommand{\FF}{{\cal F}}
\newcommand{\KK}{{\cal K}}
\newcommand{\RR}{{\cal R}}
\newcommand{\sS}{{\cal S}}
\newcommand{\XX}{{\cal X}}
\newcommand{\diver}{\mathop{\rm div}\nolimits}
\theoremstyle{plain}
\newtheorem{theorem}{Theorem}[section]
\newtheorem*{mt}{Main Theorem}
\newtheorem{lemma}[theorem]{Lemma}
\newtheorem{proposition}[theorem]{Proposition}
\theoremstyle{definition}
\newtheorem{definition}[theorem]{Definition}
\theoremstyle{remark}
\numberwithin{equation}{section}
\begin{document}
\author{Armen Shirikyan} 
\title{Euler equations are not exactly controllable\\
by a finite-dimensional external force}
\date{\small CNRS (UMR 8088), D\'epartement de Math\'ematiques\\
Universit\'e de Cergy--Pontoise, 
Site de Saint-Martin\\ 2 avenue Adolphe Chauvin\\
95302 Cergy--Pontoise Cedex, France\\ 
E-mail: Armen.Shirikyan@u-cergy.fr}

\maketitle
\begin{abstract}
We show that the Euler system is not exactly controllable by a
finite-dimensional external force. The proof is based on the comparison of 
the Kolmogorov $\e$-entropy for H\"older spaces and for the class of
functions that can be obtained by solving the 2D Euler equations with
various right-hand sides.

\bigskip
\noindent
{\bf AMS subject classifications:} 35Q35, 93B05, 93C20

\smallskip
\noindent
{\bf Keywords:} Exact controllability,
2D~Euler system, Kolmogorov $\e$-entropy
\end{abstract}
\tableofcontents
\setcounter{section}{-1}

\section{Introduction}
\label{s0}
Let us consider the controlled Euler system on the 2D torus~$\T^2$:
\begin{equation} 
\dot u+\langle u,\nabla\rangle u+\nabla p=\eta(t,x),\quad \diver u=0. \label{0.1}
\end{equation}
Here $u$ and $p$ are unknown velocity field and pressure, and~$\eta$ stands for a control force taking values in a finite-dimensional space~$E\subset L^2(\T^2,\R^2)$. Equations~\eqref{0.1} are supplemented with the initial condition
\begin{equation} \label{0.2}
u(0,x)=u_0(x).
\end{equation}
It was proved by Agrachev and Sarychev~\cite{AS-2006} that Eqs.~\eqref{0.1} are approximately controllable in~$L^2$ and exactly controllable in observed projections. More precisely, they constructed a six-dimensional subspace $E\subset C^\infty(\T^2,\R^2)$ such that the following properties hold for any~$T>0$:
\begin{description}
 \item[\it Approximate controllability:]
For any divergence-free vector fields~$u_0$ and~$\hat u$ that belong to the Sobolev space~$H^2(\T^2,\R^2)$ and any~$\e>0$ there is a smooth $E$-valued control~$\eta(t)$ such that the solution~$u$ of problem~\eqref{0.1}, \eqref{0.2} satisfies the inequality
$\|u(T)-\hat u\|_{L^2}<\e$.

 \item[\it Exact controllability in projections:]
For any subspace $F\subset H^2(\T^2,\R^2)$ of  finite dimension, any divergence-free vector field~$u_0\in H^2(\T^2,\R^2)$, and any function~$\hat u\in F$ there is a smooth $E$-valued control~$\eta(t)$ such that $\mathsf P_Fu(T)=\hat u$, where~$\mathsf P_F$ denotes the orthogonal projection in~$L^2$ onto the space~$F$. 
\end{description}
In view of the above results, an important question arises here: is it possible to prove the exact controllability for~\eqref{0.1}, or more generally, given an initial state~$u_0$ and a control space~$E$, what is the set of attainability at a time~$T$, i.~e., the family of functions~$\aA_T(u_0,E)$ that can be obtained at the time~$T$ by solving problem~\eqref{0.1}, \eqref{0.2}? Since the Euler system is time-reversible, a natural class of final states~$\hat u$ for which one may wish to prove the exact controllability is dictated by the regularity of the initial state~$u_0$ and the control~$\eta$. Namely, let us denote by~$C^s$ the H\"older space of order~$s$ on the torus and by~$C_\sigma^s$ the space of divergence-free vector fields $u\in C^s$; see Notations below for the exact definition. Assume that the initial state~$u_0$ and the control~$\eta$ are $C^s$-smooth with respect to the space variables. In this case, if $s>1$, then the solution~$u(t)$ belongs to~$C^s$ for any $t\ge0$. Conversely, for any divergence-free vector field $\hat u\in C^s$ we can find $u_0\in C^s$ such that the solution of~\eqref{0.1}, \eqref{0.2} with $\eta\equiv0$ issued from~$u_0$ coincides with~$\hat u$ at $t=T$. Thus, it is reasonable to study the problem of exact controllability for the class of final states that are as regular as the initial function and the control. The following theorem, which is a simplified version of the main result of this paper, shows that the set of attainability is much smaller than the above-mentioned class of functions.

\begin{mt}
Let $u_0$ be an arbitrary divergence-free vector field belonging to the H\"older space $C^s$ with a non-integer~$s>2$ and let $E\subset C^s$ be any finite-dimensional subspace. Then, for any $T>0$, the complement in~$C_\sigma^s$ of the set of attainability $\aA_T(u_0,E)$ is everywhere dense in~$C_\sigma^s$. 
\end{mt}

The proof of this theorem is based on two key observations. The first of them is the Lipschitz continuity of the resolving operator for~\eqref{0.1}, \eqref{0.2} with respect to the controls~$\eta$ endowed with the relaxation norm\footnote{The relaxation norm of~$\eta$ is defined as the least upper bound of the norm for the integral of~$\eta$ with respect to  time.} (see Theorem~6 in~\cite{AS-2006} and Proposition~\ref{t1.3} below). It is curious that this property is also crucial for proving the approximate controllability and exact controllability in projections~\cite{AS-2006}. The second key ingredient of the proof is an upper bound for the $\e$-entropy of the space of controls. Roughly speaking, we combine these two properties to establish an upper bound for the $\e$-entropy for set of attainability~$\aA_T(u_0,E)$ with given initial function $u_0\in C_\sigma^s$ and control space $E\subset C^s$. It turns out that this upper bound is much smaller than the $\e$-entropy of~$C_\sigma^s$, and the required property follows. 

It should be mentioned that the above theorem is false in the case when~$E$ is the space of functions supported by a given domain $D\subset\T^2$. In this situation, it is well known that the Euler system is exactly controllable (see~\cite{coron-1996} and~\cite{glass-2000} for the 2D and 3D cases, respectively).  

\smallskip
In conclusion, let us note that the Kolmogorov $\e$-entropy has proved to be an effective tool for studying various problems in analysis. For instance, we refer the reader to~\cite{Mitjagin-1961, Lorentz-1966, Lorentz1986, KH1995,  VC-1998, CE-1999, Zelik-2001, CV2002} for a number of applications of the $\e$-entropy in approximation theory, dynamical systems, and theory of attractors. This paper shows that it can also be used in the control theory for PDE's. 

\medskip
{\bf Acknowledgements}. I am grateful to P.~G\'erard 
for discussion on the Euler equations. 

\bigskip
\subsection*{Notations}
Let $X$ be a Banach space with a norm~$\|\cdot\|_X$, let $J\subset\R$ be a finite closed interval, let~$s>0$ be a non-integer, and let~$\T^d$ be the $d$-dimensional torus. We shall use the following function spaces.

\medskip
\noindent
$L^p(J,X)$ is the space of Bochner-measurable functions $f:J\to X$ such that
$$
\|f\|_{L^p(J,X)}:=\biggl(\int_J\|f(t)\|_X^pdt\biggr)^{1/p}<\infty.
$$
In the case $p=\infty$, the above norm should be replaced by
$$
\|f\|_{L^\infty(J,X)}:=\esssup_{t\in J}\|f(t)\|_X.
$$

\smallskip
\noindent
$W^{1,p}(J,X)$ stands the space of functions $f\in L^p(J,X)$ such that $\p_t f\in L^p(J,X)$. It is endowed with the natural norm. In the case $X=\R$, we shall write $L^p(J)$ and $W^{1,p}(J)$. 

\smallskip
\noindent
$C(\T^d)$ is the space of continuous functions $u:\T^d\to\R^d$ with the norm
$$
\|u\|:=\sup_{x\in\T^d}|u(x)|.
$$

\smallskip
\noindent
$C^s(\T^d)$ is the H\"older class of order~$s$ with the norm
$$
\|u\|_s:=\max_{|\alpha|\le[s]}\|\p^\alpha u\|
+\max_{|\alpha|=[s]}\,\sup_{x\ne y}\frac{|\p^\alpha u(x)-\p^\alpha u(y)|}{|x-y|^\gamma},
$$
where $\p^\alpha$ is a standard notation for derivatives, $[s]$ stands for the integer part of~$s$, and $\gamma=s-[s]$.

\smallskip
\noindent
$C_\sigma^s(\T^d)$ denotes the space of functions $u\in C^s(\T^d)$ such that $\diver u\equiv 0$. In the case $d=2$, we shall drop~$\T^d$ from the notation and write~$C^s$ and~$C_\sigma^s$. 

\smallskip
\noindent
We denote by $\langle a, b\rangle$ or $a\cdot b$ the usual scalar product  of the vectors $a,b\in\R^2$ and by~$C_1, C_2,\dots$ unessential positive constants. 

\section{Cauchy problem for Euler equations on the 2D torus}
\label{s1}
\subsection{Existence and uniqueness of solution}
Consider the Cauchy problem for the following Euler type system
on the 2D torus~$\T^2$: 
\begin{align} 
\dot u+\langle u+z,\nabla\rangle (u+z)+\nabla p&=f(t,x),
\quad \diver u=0, \label{1.1}\\
u(0,x)&=u_0(x),
\label{1.2}
\end{align}
where $z$, $f$, and $u_0$ are given functions, and $\nabla=(\p_1,\p_2)$. Let us recall the
concept of strong solution for \eqref{1.1}, \eqref{1.2}. We fix a
time interval $J=[0,T]$ and a non-integer $s>1$ and introduce the spaces 
$$
\DD_T:=C_\sigma^s\times W^{1,1}(J,C_\sigma^s)\times L^1(J,C^s),\quad \XX_T:=L^\infty(J,C^s)\cap W^{1,\infty}(J,C^{s-1}),
$$
where the spaces $C^s$, $C_\sigma^s$ and $W^{1,p}$ are defined in the Introduction (see Notations). The spaces~$\DD_T$ and~$\XX_T$ are endowed with natural norms.

\begin{definition} \label{d1.1}
Let $(u_0,z,f)\in\DD_T$ be an arbitrary triple.  A pair of functions
$(u,p)$ is called a {\it strong solution of the Cauchy problem for the
Euler type system~\eqref{1.1}\/} if~$u$ and~$p$ belong to the
spaces~$\XX_T$ and $L^1(J,C^s)$, respectively, and Eqs.~\eqref{1.1},
\eqref{1.2} are satisfied in the sense of distributions.
\end{definition}

In what follows, when dealing with solutions of Eq.~\eqref{1.1}, we
shall sometimes omit the function~$p(t,x)$ and write
simply~$u(t,x)$. This will not lead to a confusion because~$p$ can be
found, up to an additive function depending only on time, from the
relation 
$$
\Delta p=\diver\bigl(f-\langle u+z,\nabla\rangle (u+z)\bigr),
$$
which is obtained by taking the divergence of the first equation
in~\eqref{1.1}. 

The following existence and uniqueness result is essentially due to
Wolibner~\cite{Wolibner-1933} and Kato~\cite{Kato-1967} (see
also~\cite{Gerard-1992} for a concise presentation of the proofs).
 
\begin{theorem} \label{t1.2}
For any non-integer $s>1$, any time interval $J=[0,T]$ and an
arbitrary triple $(u_0,z,f)\in\DD_T$, problem~\eqref{1.1}, \eqref{1.2}
has a unique solution $u\in\XX_T$. Moreover, the resolving operator
$$
\RR:\DD_T\to \XX_T,\quad (u_0,z,f)\mapsto u(t,x),
$$ 
is bounded, that is, it maps bounded sets in~$\DD_T$ to bounded sets
in~$\XX_T$.
\end{theorem}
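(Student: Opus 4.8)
The plan is to reduce the quasilinear system~\eqref{1.1} to a linear transport equation for the vorticity coupled to an elliptic reconstruction of the velocity, to derive a~priori H\"older estimates in this formulation, and then to deduce the boundedness of~$\RR$ directly, existence by a regularisation argument, and uniqueness by an energy estimate.

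Writing $w=u+z$ and denoting by $\mathop{\rm curl}\nolimits v=\p_1v_2-\p_2v_1$ the scalar vorticity, I first observe that $w$ is divergence-free, so that applying $\mathop{\rm curl}\nolimits$ to the first equation in~\eqref{1.1} eliminates the pressure and, by the identity $\mathop{\rm curl}\nolimits\langle w,\nabla\rangle w=\langle w,\nabla\rangle\mathop{\rm curl}\nolimits w$ valid when $\diver w=0$, collapses the quadratic term into a transport term. Setting $\Omega=\mathop{\rm curl}\nolimits w$, this yields the clean transport equation
\[
\p_t\Omega+\langle w,\nabla\rangle\Omega=h,\qquad h:=\mathop{\rm curl}\nolimits f+\p_t\mathop{\rm curl}\nolimits z,
\]
where $h\in L^1(J,C^{s-1})$ by the definition of~$\DD_T$. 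The velocity is recovered from $\omega:=\Omega-\mathop{\rm curl}\nolimits z$ through the Biot--Savart law $u=\nabla^\perp\Delta^{-1}\omega+\bar u$, with $\nabla^\perp=(-\p_2,\p_1)$ and $\bar u(t)=\bar u_0+\int_0^t\bar f\,d\tau$ the mean (obtained by averaging~\eqref{1.1}); since $\nabla^\perp\Delta^{-1}$ gains one derivative and is bounded from $C^{s-1}$ to $C^s$ for non-integer $s>1$, one has $\|w\|_s\le C\|\Omega\|_{s-1}+C_1$, the constant~$C_1$ depending only on the data.

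Integrating the transport equation along the flow $\Phi_t$ of~$w$, which is well defined and bi-Lipschitz because $w(t)\in C^s\subset C^1$, gives $\Omega(t,\Phi_t(x))=\Omega(0,x)+\int_0^t h(\tau,\Phi_\tau(x))\,d\tau$ and hence the sup-norm bound $\|\Omega(t)\|\le\|\Omega(0)\|+\int_0^t\|h(\tau)\|\,d\tau$. The H\"older bound is the heart of the matter: estimating $\|\Omega(t)\|_{s-1}$ reduces, via the chain rule and the composition calculus in the non-integer space $C^{s-1}$, to controlling the relevant H\"older norms of $\Phi_t^{\pm1}$; the derivatives of the flow solve linear ODEs driven by those of~$w$, so that these norms are bounded by $\exp\bigl(C\int_0^t\|w(\tau)\|_s\,d\tau\bigr)$. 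Feeding in the Biot--Savart estimate $\|w\|_s\le C\|\Omega\|_{s-1}+C_1$ closes a nonlinear Gronwall inequality for $t\mapsto\|\Omega(t)\|_{s-1}$, whose solution stays finite on all of $[0,T]$. This produces the $L^\infty(J,C^s)$ bound for~$u$; the $W^{1,\infty}(J,C^{s-1})$ bound then follows from the equation, since $\p_t u=f-\langle w,\nabla\rangle w-\nabla p$ and elliptic regularity for $\Delta p=\diver\bigl(f-\langle w,\nabla\rangle w\bigr)$ place each term in $C^{s-1}$. All constants depend only on the $\DD_T$-norm of $(u_0,z,f)$ and on~$T$, which is precisely the asserted boundedness of~$\RR$.

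For existence I would mollify the transport velocity in the scheme above, solve the regularised system globally by Picard iteration in the smooth category, and observe that the a~priori estimates hold uniformly in the regularisation parameter; passing to the limit by an Arzel\`a--Ascoli argument---$C^s$ bounds give compactness in $C^{s'}$ for $s'<s$, while the uniform bound on $\p_t u$ gives equicontinuity in time---produces a solution in~$\XX_T$. Uniqueness is immediate: for two solutions with the same data the difference $\delta u=u^1-u^2$ is divergence-free, and pairing its equation with $\delta u$ in $L^2$ makes the transport and pressure terms vanish by incompressibility, leaving $\frac{d}{dt}\|\delta u\|_{L^2}^2\le 2\|\nabla w^1\|\,\|\delta u\|_{L^2}^2$, so that $\delta u\equiv0$ by Gronwall (here only $w^1\in C^1$, i.e.\ $s>1$, is used). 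The main obstacle is the H\"older a~priori estimate of the third step: closing the Gronwall loop demands a careful composition estimate for $\Omega(0,\cdot)\circ\Phi_t^{-1}$ in the non-integer class $C^{s-1}$ together with matched bounds on all derivatives of the flow map, and it is exactly there that the higher-order coupling (for $s>2$) between transport and the Biot--Savart reconstruction has to be controlled.
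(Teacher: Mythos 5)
Your overall strategy---vorticity transport plus Biot--Savart reconstruction, mollification for existence, an $L^2$ energy estimate for uniqueness---is the classical Wolibner--Kato scheme, which is precisely the content of the references the paper leans on. (The paper itself does not reprove this theorem: it reduces the case of general $z$ to $z\equiv0$ by the substitution $u=v-z$ and cites Kato, so your proposal is a re-derivation of the cited result rather than a genuinely different route.) The reduction to the transport equation for $\Omega=\mathop{\rm curl}\nolimits(u+z)$ with source $\mathop{\rm curl}\nolimits f+\p_t\mathop{\rm curl}\nolimits z$, the recovery of the mean of~$u$ by averaging the equation, and the uniqueness argument are all correct.

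The genuine gap is in the closure of the a priori estimate, which is exactly where global (as opposed to local) existence is decided. From the flow bound $\|\Phi_t^{\pm1}\|\le\exp\bigl(C\int_0^t\|w\|_s\,d\tau\bigr)$ and the substitution $\|w\|_s\le C\|\Omega\|_{s-1}+C_1$ you obtain an integral inequality of the form $y(t)\le A\exp\bigl(C\int_0^t y\,d\tau\bigr)$ for $y(t)=\|\Omega(t)\|_{s-1}$; differentiating the saturated version gives $y'=CAy^2$-type Riccati behaviour, which blows up at a time of order $(CA)^{-1}$. So the inequality you wrote does \emph{not} stay finite on all of $[0,T]$ for arbitrary~$T$; as it stands it yields only local existence, while the theorem asserts solvability on any interval. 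The missing ingredient is precisely the one you state but never use: the uniform bound on $\|\Omega(t)\|_{L^\infty}$ coming from transport. One must combine it with the logarithmic Calder\'on--Zygmund estimate $\|\nabla w\|_{L^\infty}\le C\|\Omega\|_{L^\infty}\bigl(1+\log(e+\|\Omega\|_{s-1})\bigr)$ (the operator $\nabla\nabla^\perp\Delta^{-1}$ is of order zero and is not bounded on $L^\infty$, whence the logarithm), and observe that only $\int_0^t\|\nabla w\|_{L^\infty}\,d\tau$, not $\int_0^t\|w\|_s\,d\tau$, need enter the exponential controlling the H\"older norm of~$\Omega$. This converts the Riccati inequality into $\log(e+y(t))\le\log(e+A)+C\int_0^t\log(e+y)\,d\tau$, which Gronwall closes globally with a double-exponential bound. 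Without this step the global statement---and hence the boundedness of~$\RR$ on all of~$\DD_T$---is not established.
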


\begin{proof}
In the case $z\equiv0$, existence and uniqueness of solutions
for~\eqref{1.1}, \eqref{1.2} is proved in~\cite{Kato-1967}. The
general case can be reduced to the former by the change of unknown
function $u=v-z$. Boundedness of the resolving operator follows easily
from the proof of existence given in~\cite{Kato-1967}.
\end{proof}


\subsection{Lipschitz continuity of the resolving operator}
We now study continuity properties of the operator~$\RR$ constructed in 
Theorem~\ref{t1.2}.  Let~$B_{\DD_T}(R)$ be the closed ball in~$\DD_T$ of radius~$R$ centred at origin. The
following proposition is one of the two key points in the proof of
non-controllability for the Euler equations. A similar result in the case of $L^2$-norm in the target space was established earlier by Agrachev and Sarychev~\cite{AS-2006}.

\begin{proposition} \label{t1.3}
For any positive constants~$T$ and~$R$ and any non-integer $s>2$ there
is $C=C(T,R,s)>0$ such that
\begin{multline} \label{1.3}
\|\RR(u_{01},z_1,f_1)-\RR(u_{02},z_2,f_2)\|_{L^\infty(J,C^{s-1})}\\
\le C\,\bigl(\|u_{01}-u_{02}\|_{C^{s-1}}+\|z_1-z_2\|_{L^1(J,C^s)}
+\|f_1-f_2\|_{L^1(J,C^{s-1})}\bigr),
\end{multline}
where $(u_{0i},z_i,f_i)$, $i=1,2$, are arbitrary triples belonging to
the ball~$B_{\DD_T}(R)$. 
\end{proposition}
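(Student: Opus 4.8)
The plan is to reduce the estimate to a single transport inequality for the difference $w:=\RR(u_{01},z_1,f_1)-\RR(u_{02},z_2,f_2)$ and to close it by Gronwall's lemma, eliminating the pressure through the incompressibility constraint. Writing $v_i=u_i+z_i$, $V=v_1-v_2=w+\zeta$ with $\zeta=z_1-z_2$, $g=f_1-f_2$ and $q=p_1-p_2$, I would subtract the two copies of~\eqref{1.1}, regroup the quadratic terms as $\langle v_1,\nabla\rangle v_1-\langle v_2,\nabla\rangle v_2=\langle v_1,\nabla\rangle V+\langle V,\nabla\rangle v_2$, and arrive at
\[
\dot w+\langle v_1,\nabla\rangle w
=-\langle v_1,\nabla\rangle\zeta-\langle V,\nabla\rangle v_2-\nabla q+g=:h,
\]
where the transporting field $v_1$ is divergence-free, since $\diver u_i=0$ and $z_i\in C_\sigma^s$. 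All the difficulty is concentrated in the term $\nabla q$: the remaining contributions to $h$ are products of a $C^s$ factor and a $C^{s-1}$ factor, hence bounded in $C^{s-1}$ by $C(R)\bigl(\|w\|_{C^{s-1}}+\|\zeta\|_{C^s}\bigr)+\|g\|_{C^{s-1}}$, once one knows that $\|v_i(t)\|_{C^s}$ stays below a constant $C(R)$ uniformly on $J$; this uniform bound follows from Theorem~\ref{t1.2} together with the embedding $W^{1,1}(J,C_\sigma^s)\hookrightarrow L^\infty(J,C^s)$.

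Next I would control the pressure. Taking the divergence of the two momentum equations and using $\diver v_i=0$ gives $\Delta p_i=\diver f_i-\sum_{jk}(\p_j(v_i)_k)(\p_k(v_i)_j)$, whence
\[
\Delta q=\diver g-\sum_{jk}\bigl[(\p_jV_k)(\p_k(v_1)_j)+(\p_j(v_2)_k)(\p_kV_j)+(\p_jV_k)(\p_kV_j)\bigr].
\]
The crucial structural point is that only first-order derivatives appear on the right-hand side: each summand is a product of two $C^{s-1}$ functions and therefore lies in $C^{s-2}$, while the quadratic term in $V$ is linearised by means of the a priori bound $\|V\|_{C^{s-1}}\le C(R)$. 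This yields $\|\Delta q\|_{C^{s-2}}\le\|g\|_{C^{s-1}}+C(R)\bigl(\|w\|_{C^{s-1}}+\|\zeta\|_{C^{s-1}}\bigr)$, and the Schauder estimate for the Laplacian on $\T^2$ (applied to the mean-zero part, and valid precisely because $s-2$ is positive and non-integer) gives $\|\nabla q\|_{C^{s-1}}\le C\|\Delta q\|_{C^{s-2}}$. Combining these bounds I obtain $\|h(t)\|_{C^{s-1}}\le C(R)\bigl(\|w(t)\|_{C^{s-1}}+\|\zeta(t)\|_{C^s}+\|g(t)\|_{C^{s-1}}\bigr)$.

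Finally I would invoke the standard transport estimate in Hölder classes. Since $v_1$ is divergence-free and bounded in $C^s$ on $J$, its flow $\Phi^t$ is a bi-Lipschitz $C^{s-1}$ diffeomorphism with norms controlled by $C(T,R)$; composing the integrated identity $w(t,\Phi^t(x))=w(0,x)+\int_0^t h(\tau,\Phi^\tau(x))\,d\tau$ with $\Phi^{-t}$ and estimating the resulting composition in $C^{s-1}$ gives
\[
\|w(t)\|_{C^{s-1}}\le C(T,R)\Bigl(\|w(0)\|_{C^{s-1}}+\int_0^t\|h(\tau)\|_{C^{s-1}}\,d\tau\Bigr).
\]
Inserting the bound on $h$ and applying Gronwall's inequality over $J$ produces exactly~\eqref{1.3}, since $\|w(0)\|_{C^{s-1}}=\|u_{01}-u_{02}\|_{C^{s-1}}$, $\int_J\|\zeta(\tau)\|_{C^s}\,d\tau=\|z_1-z_2\|_{L^1(J,C^s)}$ and $\int_J\|g(\tau)\|_{C^{s-1}}\,d\tau=\|f_1-f_2\|_{L^1(J,C^{s-1})}$.

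The main obstacle I anticipate is the pressure estimate: one must verify that the divergence of the nonlinear difference produces no second-order derivatives, so that $\Delta q$ loses only two derivatives relative to $C^s$, and that the Hölder–Schauder estimate applies with the non-integer exponent $s-2$; this is the single place forcing $s>2$ rather than the $s>1$ sufficient for Theorem~\ref{t1.2}. A secondary technical point is the $C^{s-1}$ transport estimate with $s-1>1$, where the top-order commutator $[\p^\alpha,v_1\cdot\nabla]w$, or equivalently the composition of a $C^{s-1}$ function with the $C^{s-1}$ flow map, must be controlled; both are routine but rely on the uniform-in-$R$ bound for $\|v_1\|_{L^\infty(J,C^s)}$ supplied by Theorem~\ref{t1.2}.
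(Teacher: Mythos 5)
Your argument is correct, but it follows a genuinely different route from the paper. The paper does not keep the pressure at all: it applies $\nabla^\bot=(-\p_2,\p_1)$ to the momentum equation, so that $\nabla p$ disappears identically, obtains a single scalar transport equation for the vorticity difference along the divergence-free field $u_2+z_2$, solves it by characteristics, and then recovers the velocity via the Biot--Savart operator $G=\nabla^\bot\Delta^{-1}$, whose boundedness $C^{s-1}\to C^{s}$ supplies the needed gain of one derivative before the Gronwall step. You instead stay at the velocity level, transport $w=u_1-u_2$ along $v_1$, and eliminate the pressure difference by solving $\Delta q=\diver g-\sum_{jk}(\dots)$ with the Schauder estimate $\|\nabla q\|_{C^{s-1}}\le C\|\Delta q\|_{C^{s-2}}$; the key structural observation that the right-hand side contains only first-order derivatives (it is a total divergence, hence mean-zero, so $\Delta^{-1}$ applies) is exactly what makes this work. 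The two proofs are morally dual: the same elliptic gain of two derivatives for $\Delta^{-1}$ on H\"older spaces enters either through $G$ acting on the vorticity or through the pressure, and both reduce to transport along a $C^s$ flow plus Gronwall, both requiring $s>2$ because one derivative is lost in the difference estimate. Your version is more self-contained (no vorticity formalism) at the price of the explicit pressure computation; the paper's version packages the scalar transport structure more cleanly. One small slip: in your expansion of $\sum_{jk}(\p_j(v_1)_k)(\p_k(v_1)_j)-(\p_j(v_2)_k)(\p_k(v_2)_j)$ you should have either $(\p_jV_k)(\p_k(v_1)_j)+(\p_j(v_2)_k)(\p_kV_j)$ with no quadratic remainder, or the version with $v_2$ in the first factor plus the quadratic term $(\p_jV_k)(\p_kV_j)$; writing all three double-counts. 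This is harmless for the estimate, since the quadratic term is in any case absorbed by the a priori bound on $\|V\|_{C^{s-1}}$, but it should be fixed.
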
 

\begin{proof}
Derivation of~\eqref{1.3} is based on a well-known idea of reduction
of the 2D Euler system to a nonlinear transport equation for the vorticity (e.g.,
see~\cite{Gerard-1992}). For the reader's convenience, we give a
detailed proof of the proposition. 
We shall confine ourselves to derivation of~\eqref{1.3} for smooth solutions. The proof in the general case can be carried out by a standard approximation argument.

Let $u(t,x)$ be a smooth solution for~\eqref{1.1}, \eqref{1.2}. Applying the operator $\nabla^\bot=(-\p_2,\p_1)$ to the first relation in~\eqref{1.1} and to~\eqref{1.2}, we obtain
$$
\dot v+\langle u+z,\nabla\rangle(v+\zeta)=g,\quad v(0,x)=v_0(x),
$$
where $v=\nabla^\bot\cdot u$, $\zeta=\nabla^\bot\cdot z$, $g=\nabla^\bot\cdot f$, and $v_0=\nabla^\bot \cdot u_0$. It follows that if $u_i$, $i=1,2$, are two smooth solutions associated with data $(u_{0i},z_i,f_i)$, then the function $v=\nabla^\bot (u_1-u_2)$ is a solution of the problem 
\begin{align}
\dot v+\langle u_2+z_2,\nabla\rangle v&=g-\langle u_2+z_2,\nabla\rangle \zeta-\langle u+z,\nabla\rangle(v_1+\zeta_1), \label{1.4}\\
v(0,x)&=v_0(x),\label{1.5}
\end{align}
where $u=u_1-u_2$,  $z=z_1-z_2$,  $\zeta=\nabla^\bot\cdot z$, $\zeta_i=\nabla^\bot\cdot z_i$, $g=\nabla^\bot\cdot(f_1-f_2)$, and $v_0=\nabla^\bot\cdot(u_{01}-u_{02})$. Thus, $v$~is a solution of an inhomogeneous transport equation associated with the divergence-free vector field $u_2+z_2$. It follows that 
\begin{equation} \label{1.6}
v(t,x)=v_0(U_{0,t}(x))+\int_0^th(\tau,U_{\tau,t}(x))\,d\tau,
\end{equation} 
where $U_{t,\tau}(x)$ denotes the flow defined by the vector field $u_2+z_2$, and~$h$ stands for the right-hand side in~\eqref{1.4}. Let us denote by~$\Delta^{-1}$ the inverse of the Laplace operator in the space of functions on~$\T^2$ with zero mean value. Recalling that
the functions~$u$ and~$v$ are connected by the relations $v=\nabla^\bot\cdot u$ and $u=Gv$, where $G=\nabla^\bot\Delta^{-1}$, from~\eqref{1.6} we derive
\begin{equation} \label{1.60}
u(t,x)=G\bigl[(\nabla^\bot u_0)(U_{0,t}(x))\bigr]
+\int_0^tG\bigl[h(\tau,U_{\tau,t}(x))\bigr]\,d\tau.
\end{equation} 
Now note that $U_{t,\tau}(x)$, $t,\tau\in J$, are diffeomorphisms of the torus with uniformly bounded $C^s$-norms, and the function~$h$ can be written as 
$$
h=\nabla^\bot\cdot f-\diver\bigl(\zeta(u_2+z_2)-(v_1+\zeta_1)(u+z)\bigr),
$$
where $f=f_1-f_2$. Since the operator~$G:C^{s-1}\to C^s$ is bounded (see~\cite[Section~4.3]{GT2001}, taking the $C^{s-1}$-norm of both sides in~\eqref{1.60}, we see that
\begin{equation} \label{1.7}
\|u(t)\|_{s-1}\le C_1\|u_0\|_{s-1}
+C_1\int_0^t\bigl(\|f\|_{s-1}+\|\zeta(u_2+z_2)-(v_1+\zeta_1)(u+z)\|_{s-1}\bigr)\,d\tau,
\end{equation}
where $C_1>0$ depends only on~$R$. The second term under the integral in~\eqref{1.7} can be estimated by
$$
\|\zeta\|_{s-1}\|u_2+z_2\|_{s-1}+\|v_1+\zeta_1\|_{s-1}\|u+z\|_{s-1}
\le C_2\bigl(\|z\|_s+\|u\|_{s-1}\bigr).
$$
Substituting this expression into~\eqref{1.7}, we obtain
$$
\|u(t)\|_{s-1}\le C_1\|u_0\|_{s-1}
+C_3\int_0^t\bigl(\|f\|_{s-1}+\|z\|_s+\|u\|_{s-1}\bigr)\,d\tau,
$$
where $C_3$ is a constant depending only on~$R$. Application of the Gronwall inequality gives the required estimate~\eqref{1.3}. 
\end{proof}

\section{Kolmogorov $\boldsymbol\e$-entropy}
\label{s2}

\subsection{Definition and an elementary property}
Let $X$ be a Banach space and let $K\subset X$ be a compact
subset. Let us recall the concept of $\e$-entropy, which characterises
the ``massiveness'' of~$K$ (e.g., see~\cite{Lorentz1986}). For any
$\e>0$, we denote by $N_\e(K)$ the minimal number of sets of
diameters~$\le2\e$ that are needed to cover~$K$. The {\it Kolmogorov
$\e$-entropy\/} (or simply $\e$-entropy) of~$K$ is defined as
$H_\e(K)=\ln N_\e(K)$. Thus, the $\e$-entropy of a compact set
$K\subset X$ is a non-increasing function of $\e>0$, and it is easy to
see that~$H_\e(K)$ depends only on the metric on~$K$ (and not on the
ambient space~$X$). If we wish to emphasise that~$K$ is endowed with the norm of~$X$, then we shall write $H_\e(K,X)$. 

Now let $Y$ be another Banach space and let $f\!:K\to Y$ be a
Lipschitz-continuous function:
\begin{equation} \label{2.1}
\|f(u_1)-f(u_2)\|_Y\le L\|u_1-u_2\|_X\quad\mbox{for $u_1,u_2\in K$}, 
\end{equation}
where $L>0$ is a constant. The following lemma is a straightforward
consequence of the definition.

\begin{lemma} \label{l2.1}
For any compact set $K\subset X$ and any function $f\!:K\to Y$
satisfying inequality~\eqref{2.1}, we have
\begin{equation} \label{2.2}
H_\e(f(K))\le H_{\e/L}(K)\quad\mbox{for all $\e>0$}.
\end{equation}
\end{lemma}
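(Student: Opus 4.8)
The plan is to observe that, since the logarithm is monotone, inequality~\eqref{2.2} is equivalent to the covering-number bound
\begin{equation*}
N_\e(f(K))\le N_{\e/L}(K).
\end{equation*}
Thus it suffices to manufacture, out of an economical covering of~$K$ at scale~$\e/L$, a covering of~$f(K)$ at scale~$\e$ that uses no more elements.

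First I would fix an optimal covering of the source set: writing $n=N_{\e/L}(K)$, choose sets $G_1,\dots,G_n\subset X$ of diameter at most~$2\e/L$ whose union contains~$K$. Replacing each~$G_j$ by $G_j\cap K$, I may assume $G_j\subset K$; this does not increase the diameters, and it guarantees that the Lipschitz estimate~\eqref{2.1} is available for every pair of points lying in a common~$G_j$.

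The key step is the elementary fact that a Lipschitz map enlarges diameters by a factor of at most~$L$. Indeed, for any $u_1,u_2\in G_j$ one has
\begin{equation*}
\|f(u_1)-f(u_2)\|_Y\le L\,\|u_1-u_2\|_X\le L\cdot\frac{2\e}{L}=2\e,
\end{equation*}
so each image $f(G_j)$ has diameter at most~$2\e$. Since the $G_j$ cover~$K$, the images $f(G_j)$ cover~$f(K)$, and there are exactly~$n$ of them. This produces a covering of~$f(K)$ by~$n$ sets of diameter $\le 2\e$, whence $N_\e(f(K))\le n=N_{\e/L}(K)$, which is precisely the desired inequality.

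I expect no serious obstacle, as the statement is a direct unwinding of the definitions; the only point deserving a word of care is that the contraction bound~\eqref{2.1} is hypothesised only on~$K$, which is why the covering sets must be intersected with~$K$ before~$f$ is applied.
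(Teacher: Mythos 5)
Your argument is correct and is precisely the ``straightforward consequence of the definition'' that the paper invokes without writing out: cover $K$ by $N_{\e/L}(K)$ sets of diameter at most $2\e/L$, intersect with $K$ so that \eqref{2.1} applies, and observe that the images under $f$ cover $f(K)$ with diameters at most $2\e$. Nothing further is needed.
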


\subsection{Estimates for the $\e$-entropy of some compact sets}
Let $\varphi_1$ and $\varphi_2$ be two non-increasing functions of
$\e>0$. We shall write $\varphi_1\prec\varphi_2$ if there are positive
constants~$C$ and~$\e_0$ such that
$$
\varphi_1(\e)\le C\varphi_2(\e)\quad\mbox{for $0<\e\le\e_0$}.
$$
If $\varphi_1\prec\varphi_2$ and $\varphi_2\prec\varphi_1$, then we write $\varphi_1\sim\varphi_2$. The second key ingredient of the proof of non-controllability for the Euler system is given by the following two propositions.

\begin{proposition} \label{p2.2}
Let $r<s$ be positive non-integers such that $s-r\notin\Z$ and let
$B\subset C_\sigma^s(\T^d)$ be an arbitrary closed ball. Then, for any $\delta>0$, we have
\begin{equation}  \label{2.3}
H_\e(B,C^r(\T^d))\succ\Bigl(\frac1\e\Bigr)^{\frac{d}{s-r}-\delta}. 
\end{equation}
\end{proposition}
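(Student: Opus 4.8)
The plan is to establish the lower bound \eqref{2.3} by exhibiting a large, well-separated family of functions inside the ball $B$ and applying a volumetric (packing) argument. The key point is that the $\e$-entropy is bounded below by the logarithm of the maximal number of points in $B$ that are pairwise $2\e$-separated in the $C^r$-norm; so it suffices to construct such a packing of cardinality at least $\exp\bigl(c\,\e^{-(d/(s-r)-\delta)}\bigr)$. First I would reduce to the scalar case: since $C_\sigma^s$ contains the image under $\nabla^\bot$ of scalar stream functions, and $\nabla^\bot$ loses exactly one derivative both ways, it is enough to produce the packing for scalar $C^s$-functions and transfer it to divergence-free fields (up to adjusting $r,s$ by one, which does not change the exponent $d/(s-r)$).

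The construction itself is the standard wavelet/bump-function scheme. Fix a smooth bump $\psi$ supported in a small cube, and for a frequency scale $N\in\N$ partition $\T^d$ into $N^d$ congruent cubes of side $N^{-1}$, placing a rescaled copy $\psi_k(x)=\psi(Nx-k)$ in each cube $k$. For a sign vector $\sigma\in\{0,1\}^{N^d}$ set
\begin{equation} \label{plan.1}
u_\sigma(x)=a_N\sum_{k}\sigma_k\,\psi_k(x),\qquad a_N=c\,N^{-s}.
\end{equation}
The normalization $a_N\sim N^{-s}$ is chosen so that each summand has $C^s$-norm of order $N^s\cdot N^{-s}=\const$: the $[s]$ derivatives contribute factors $N^{|\alpha|}$ and the Hölder seminorm of the top derivative contributes $N^{[s]+\gamma}=N^s$. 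Since the bumps have disjoint supports, the $C^s$-norm of the whole sum is also $O(1)$, so after fixing $c$ small all $u_\sigma$ lie in a fixed ball, which we may take to be $B$ (a translation and scaling reduce an arbitrary ball to this one).

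Next I would estimate the $C^r$-separation between two distinct sign vectors. If $\sigma$ and $\sigma'$ differ in coordinate $k$, then $u_\sigma-u_{\sigma'}$ contains the isolated bump $\pm a_N\psi_k$, whose $C^r$-norm is of order $a_N N^r\sim N^{r-s}$. Hence any two distinct points are separated by at least $\e_N:=c'\,N^{r-s}$ in $C^r$, giving a $2\e$-packing of $B$ with $\e\sim N^{r-s}$ and cardinality $2^{N^d}$. Taking logarithms yields
\begin{equation} \label{plan.2}
H_{\e_N}(B,C^r)\ge N^d\ln 2\sim \e_N^{-d/(s-r)},
\end{equation}
which is even better than the claimed bound with the $\delta$-loss. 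A short interpolation between consecutive scales $N$ and $N+1$ (or simply a comparison using monotonicity of $H_\e$) passes from the discrete sequence $\e_N$ to all small $\e$, and the arbitrarily small $\delta>0$ in the exponent absorbs this interpolation as well as the unspecified constants.

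The main obstacle, and the only place requiring genuine care, is verifying that the $C^s$-norm of the sum \eqref{plan.1} is $O(1)$ uniformly in $N$ and $\sigma$, i.e.\ that the disjointly supported rescaled bumps do not accumulate. For the sup-norms of derivatives this is immediate from disjoint supports, but the Hölder seminorm of the top derivative must be checked for pairs $x,y$ lying in \emph{different} cubes; there the numerator $|\p^\alpha u_\sigma(x)-\p^\alpha u_\sigma(y)|$ is controlled by each bump separately while the denominator $|x-y|^\gamma$ could be small only when $x,y$ are in adjacent cubes, and a standard splitting of the difference quotient handles this. The hypothesis that $s$ and $s-r$ are non-integers is exactly what makes the scaling exponents $N^s$ and $N^{r-s}$ clean (no logarithmic corrections), so the estimate closes with the stated power loss.
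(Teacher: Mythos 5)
Your argument is correct, but it follows a genuinely different route from the paper's. The paper constructs nothing: it quotes the asymptotics $H_\e(Q,C(\T^d))\sim(1/\e)^{d/q}$ for a ball $Q\subset C_\sigma^q(\T^d)$ from Lorentz's book, upgrades it to $H_\e(A,C^\nu)\succ(1/\e)^{d/(s-r+\nu)}$ for sets $A$ with non-empty interior in $C_\sigma^{s-r+\nu}(\T^d)$ using only the embedding $C^\nu\hookrightarrow C$, and then transports the bound to the pair $(C_\sigma^s,C^r)$ via the isomorphism $(1-\Delta)^{-(r-\nu)/2}$ combined with Lemma~\ref{l2.1}; the auxiliary non-integer $\nu>0$, which must be kept positive, is exactly what produces the $\delta$-loss in the exponent. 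Your packing of disjointly supported rescaled bumps is self-contained, yields the sharp rate $(1/\e)^{d/(s-r)}$ with no $\delta$-loss, and in effect reproves the cited Kolmogorov--Tikhomirov asymptotics rather than invoking them; it also shows that the hypothesis $s-r\notin\Z$ is an artifact of the paper's isomorphism argument and is not needed for the lower bound (only $r,s\notin\Z$ matter, to keep the H\"older norms cleanly scaled). Two details to watch: the reduction to scalar functions via stream functions as you state it is specific to $d=2$ --- for general $d$ take, e.g., $u=(\p_2\phi,-\p_1\phi,0,\dots,0)$, or better, measure the $C^r$-separation of the vector fields directly on the isolated bump, which avoids needing a lower bound for $\nabla^\bot$ on $C^{r+1}$; and the cross-cube H\"older quotient, which you correctly single out as the delicate point, is settled by inserting the crossing point of the segment $[x,y]$ with the cell boundary, where all derivatives of the bumps vanish.
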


\begin{proof}
Let us recall that if~$Q$ is a closed ball in $C_\sigma^{q}(\T^d)$ with a non-integer $q>0$, then
\begin{equation}  \label{2.4}
H_\e(Q,C(\T^d))\sim\Bigl(\frac1\e\Bigr)^{\frac{d}{q}}; 
\end{equation}
see~\cite[Section~10.2]{Lorentz1986}. Since $C^\nu(\T^d)$ is continuously embedded in~$C(\T^d)$ for any $\nu>0$, it follows from~\eqref{2.4} that if~$A\subset C_\sigma^{s-r+\nu}(\T^d)$ is any closed subset with non-empty interior and $s-r+\nu\notin\Z$, then
\begin{equation}  \label{2.5}
H_\e(A,C^\nu(\T^d))\succ\Bigl(\frac1\e\Bigr)^{\frac{d}{s-r+\nu}}. 
\end{equation}
Furthermore, if $\nu\notin\Z$, then the operator
$(1-\Delta)^{-(r-\nu)/2}$ (where~$\Delta$ is the Laplacian) defines an isomorphism from~$C^\nu(\T^d)$ to~$C^r(\T^d)$ and from~$C_\sigma^{s-r+\nu}(\T^d)$ to~$C_\sigma^s(\T^d)$ (see~\cite[Section~4.3]{GT2001}). Combining this with relation~\eqref{2.5} and Lemma~\ref{l2.1}, we see that 
\begin{equation}  \label{2.55}
H_\e(B,C^r(\T^d))\succ\Bigl(\frac1\e\Bigr)^{\frac{d}{s-r+\nu}}, 
\end{equation}
where $B$ is an arbitrary closed ball in $C_\sigma^s(\T^d)$. It remains to note that the left-hand side of~\eqref{2.55} does not depend on the parameter~$\nu>0$, which can be chosen arbitrarily small.
\end{proof}

\begin{proposition} \label{p2.3}
Let $J=[0,T]$ and let $E$ be a
finite-dimensional vector space. Then, for any closed ball $B\subset W^{1,1}(J,E)$, we have
\begin{equation}  \label{2.6}
H_\e(B,L^1(J,E))\prec\frac1\e\ln\frac1\e. 
\end{equation}
\end{proposition}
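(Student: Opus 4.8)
The plan is to estimate the $\e$-entropy of a ball in $W^{1,1}(J,E)$ with respect to the $L^1(J,E)$-norm by reducing to a scalar, one-dimensional problem and then building an explicit covering. Since $E$ is finite-dimensional, all norms on $E$ are equivalent, so after fixing a basis it suffices (up to absorbing dimensional constants into the implicit constant in $\prec$) to treat the case $E=\R$; that is, to bound $H_\e(B,L^1(J))$ for a closed ball $B\subset W^{1,1}(J)$. The key structural fact I would exploit is that a bound on $\|f\|_{W^{1,1}(J)}$ controls both the $L^1$-size and the total variation of $f$, and a function of bounded variation can be uniformly approximated on $J$ by piecewise-constant functions on a fine grid, with an error governed by the mesh size times the variation.

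First I would set up the grid. Fix $R>0$ so that $B$ is the ball of radius $R$ centred at the origin, and for $f\in B$ note $\|f\|_{L^1}\le R$ and $\var_J(f)\le\|\p_t f\|_{L^1}\le R$. Partition $J=[0,T]$ into $n$ subintervals $J_1,\dots,J_n$ of equal length $T/n$, and on each $J_k$ replace $f$ by its mean value $a_k=\frac{n}{T}\int_{J_k}f\,dt$. Writing $\tilde f$ for the resulting step function, the oscillation estimate gives $\|f-\tilde f\|_{L^1(J)}\le (T/n)\var_J(f)\le RT/n$, so choosing $n\sim 1/\e$ makes this $L^1$-error at most of order $\e$. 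The price is that $\tilde f$ is now described by the vector $(a_1,\dots,a_n)\in\R^n$, each coordinate bounded by a constant (since the means of an $L^\infty$-type bound are bounded; more precisely the $a_k$ are controlled because $f$ is bounded in sup-norm by $\var_J(f)+\frac1T\|f\|_{L^1}\lesssim R$, as BV functions on a bounded interval are bounded).

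Next I would count the step functions. Having reduced each $f\in B$ to a point $(a_1,\dots,a_n)$ in a cube $[-M,M]^n$ with $M\lesssim R$, I cover that cube by smaller cubes of side $\e$ in each coordinate; a cube of side $\e$ contributes $L^1$-error at most $(T/n)\cdot\e\cdot n = T\e$ when its centre replaces the true coordinates, so the total $L^1$ discrepancy stays of order $\e$. The number of such small cubes needed is $(2M/\e+1)^n \le (C/\e)^n$ for a constant $C=C(R,T)$. Taking logarithms, $H_\e(B,L^1(J))\le n\ln(C/\e)$, and with $n\sim 1/\e$ this is of order $\frac1\e\ln\frac1\e$, which is exactly the bound~\eqref{2.6}. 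Reinstating the finite-dimensional factor $\dim E$ only multiplies the count by a fixed power, leaving the asymptotics unchanged.

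The main obstacle I anticipate is bookkeeping the two sources of error simultaneously and choosing the parameters so that the total stays $\lesssim\e$ while the cardinality stays at $(C/\e)^{O(1/\e)}$. One must verify that the discretisation of the coordinate values (side-$\e$ cubes) and the temporal discretisation (mesh $T/n$) combine additively in the $L^1$-norm and that the uniform bound $M$ on the coefficients $a_k$ genuinely follows from $\|f\|_{W^{1,1}}\le R$; the latter rests on the embedding $W^{1,1}(J)\hookrightarrow L^\infty(J)$ (equivalently, the fundamental theorem of calculus giving $|f(t)|\le |f(t_0)|+\var_J(f)$ together with an averaged choice of $t_0$). Once that uniform $L^\infty$ control is secured, the rest is the routine covering count sketched above.
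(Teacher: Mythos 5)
Your proposal is correct and follows essentially the same route as the paper: reduce to the scalar case using the finite-dimensionality of $E$, approximate each $f$ in the ball by a piecewise-constant function on a grid of mesh $\sim\e$ (the paper uses left-endpoint values where you use interval means, and a finer $\e^2$-spaced value grid where you use $\e$-spaced cubes, but both yield the same error bookkeeping), and count the resulting net to get $(C/\e)^{O(1/\e)}$ elements, hence entropy of order $\frac1\e\ln\frac1\e$. No gaps.
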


\begin{proof}
We first note that it suffices to prove~\eqref{2.6} for scalar functions. Indeed, if~$E$ is an $n$-dimensional vector space, then~$B$ is a subset of the direct product of~$n$ balls $B_1\subset W^{1,1}(J)$. If~\eqref{2.6} is established in the case $\dim E=1$, then
$$
H_{n\e}(B,W^{1,1}(J,E))\le n H_\e(B_1,W^{1,1}(J))
\le\frac{Cn}{\e}\ln\frac1\e; 
$$
see inequality~(7) in Section 10.1 of~\cite{Lorentz1986}.
Replacing~$n\e$ by~$\e$ in the above estimate, we obtain~\eqref{2.6}. 

We now prove~\eqref{2.6} for scalar functions. Without loss of generality, we can assume that $J=[0,1]$ and $B\subset W^{1,1}(J)$ is a closed ball of radius~$R$ centred at zero. Let us fix $\e>0$ and describe a finite family of functions $\FF\subset W^{1,1}(J)$ that form an $\e$-net for~$B$. To this end, we choose sufficiently large integers~$L$ and~$M$ and denote by~$I_k$ the interval $[t_{k-1},t_{k})$, where $t_k=k/L$. The family~$\FF$ consists of all functions $f\in L^1(J)$ that are constant on every interval~$I_k$, $k=1,\dots,L$, and take one of the values $2jR/M$, $j=-M,\dots,M$, on each interval of constancy. It is clear that~$\FF$ consists of $N(L,M):=(2M+1)^L$ elements. Let us show that, for an appropriate choice of~$L$ and~$M$, the family~$\FF$ is an $\e$-net for~$B$. 

We first note that 
$$ 
\|u\|_{L^\infty(J)}\le 2R\quad\mbox{for any $u\in B$}.
$$ 
Furthermore, 
$$
|u(t)-u(t_{k-1})|\le \int_{t_{k-1}}^t|\dot u(\tau)|\,d\tau
\quad\mbox{for $t\in I_k$},
$$
whence it follows that 
\begin{align}
\sum_{k=1}^L\int_{I_k}|u(t)-u(t_{k-1})|\,dt
&\le\sum_{k=1}^L\int_{I_k}\int_{t_{k-1}}^t|\dot u(\tau)|\,d\tau\,dt\notag\\
&\le\sum_{k=1}^L\int_{I_k}|\dot u(\tau)|(t_k-\tau)\,d\tau \notag\\
&\le L^{-1}\|\dot u\|_{L^1(J)}\le RL^{-1}. 
\label{2.7}
\end{align}
Now note that for any $L$-tuple $(u_0,\dots,u_{L-1})$ there is $f\in\FF$ such that
\begin{equation} \label{2.8}
|f(t)-u_{k-1}|\le 2RM^{-1}\quad\mbox{for $t\in I_k$, $k=1,\dots,L$}.
\end{equation} 
Combining inequalities~\eqref{2.7} and~\eqref{2.8}, in which $u_k=u(t_k)$, we obtain
\begin{align}
\int_0^1|u(t)-f(t)|\,dt
&=\sum_{k=1}^L\int_{I_k}|u(t)-f(t)|\,dt\notag\\
&\le \sum_{k=1}^L\int_{I_k}\bigl(|u(t)-u(t_{k-1})|+|u(t_{k-1})-f(t)|\bigr)\,dt\notag\\
&\le RL^{-1}+2RLM^{-1}. 
\label{2.9}
\end{align}
Let us set
\begin{equation} \label{2.10}
L=\bigl[2R/\e\bigr]+1,\quad M=\bigl[4RL/\e\bigr]+1,
\end{equation} 
where $[a]$ stands for the integer part of~$a\ge0$. In this case, it follows from~\eqref{2.9} that
$$
\|u-f\|_{L^1(J)}\le E.
$$
Thus, the family $\FF$ is an $\e$-net for~$B$. 

Let us estimate the number of elements in~$\FF$. Relations~\eqref{2.10} imply that 
$$
N(L,M)=(2M+1)^L\le\bigl(C_1\e^{-2}\bigr)^{C_2\e^{-1}}
\le\exp\bigl(C_3\e^{-1}\ln\e^{-1}\bigr).
$$
Taking the logarithm, we arrive at the required estimate~\eqref{2.6}.
\end{proof}

\section{Main result}
\label{s3}

\subsection{Formulation}
\label{s3.1}
Let us fix a time interval $J=[0,T]$ and consider the controlled 2D
Euler system on the domain $J\times\T^2$:
\begin{align} 
\dot u+\langle u,\nabla\rangle u+\nabla p&=h(t,x)+\eta(t,x),
\quad \diver u=0,\label{3.1}\\
u(0,x)&=u_0(x). \label{3.2}
\end{align}
Here~$h$ and~$u_0$ are given functions, and~$\eta$ is a control. In what follows, we fix a non-integer $s>2$ and assume that $h\in L^1(J,C^s)$ and $u_0\in C^s$. Let $E\subset C^s$ be a closed subspace and let $\KK\subset C_\sigma^s$ be any subset. 

\begin{definition} \label{d3.1}
We shall say that the 2D Euler system with given external force $h\in L^1(J,C^s)$ and initial function $u_0\in C_\sigma^s$ is {\it exactly controllable in time~$T$ for the class~$\KK$} if for any $\hat u\in\KK$ there is $\eta\in L^1(J,E)$ such that 
$$
u(T,x)=\hat u(x),
$$
where $u\in\XX_T$ stands for the solution of~\eqref{3.1}, \eqref{3.2}.
\end{definition}

Let us give an equivalent definition of exact controllability in terms of the set of attainability. Let us denote  by~$\RR_t(u_0,f)$ the operator that takes the pair $(u_0,f)\in C_\sigma^s\times L^1(J,C^s)$ to $u(t)\in C^s$, where $u\in\XX_T$ stands for the solution of problem~\eqref{1.1}, \eqref{1.2} with $z\equiv0$. For given~$u_0$ and~$h$, let $\aA_T(u_0,h,E)$ be the image of $L^1(J,E)$ under the mapping $\RR_T(u_0,h+\cdot)$. It is clear that the Euler system is exactly controllable 
in time~$T$ for a class~$\KK\subset C_\sigma^s$ if and only if $\aA_T(u_0,h,E)\supset\KK$. 

Let  $\aA_T^c(u_0,h,E)$ be the complement of $\aA_T(u_0,h,E)$ in the space~$C_\sigma^s$. The following theorem is the main result of this paper. 

\begin{theorem} \label{t3.2}
Let $s>2$ be any non-integer, let $E\subset C^s$ be an arbitrary finite-dimensional subspace, and let $u_0\in C_\sigma^s$ and $h\in L^1(J,C^s)$ be given functions. Then, for any non-negative $\gamma<1$ and any ball $Q\subset C_\sigma^{s+\gamma}$, we have
\begin{equation} \label{3.3}
\aA_T^c(u_0,h,E)\cap Q\ne\varnothing.
\end{equation}
In particular, the 2D Euler system is not exactly controllable in any time~$T$ for the class $C_\sigma^{s+\gamma}$. 
\end{theorem}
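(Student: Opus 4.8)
The plan is to combine the Lipschitz continuity of the resolving operator (Proposition~\ref{t1.3}) with the two $\e$-entropy estimates (Propositions~\ref{p2.2} and~\ref{p2.3}) via the elementary Lemma~\ref{l2.1} to derive a contradiction. The heuristic is that the set of attainability $\aA_T(u_0,h,E)$ is the Lipschitz image of a bounded set of controls in a finite-dimensional space, so it must be ``thin'' in the sense of having small $\e$-entropy; but a ball $Q\subset C_\sigma^{s+\gamma}$, being compactly contained in a space more regular than $C_\sigma^s$, is ``thick''. If $\aA_T(u_0,h,E)$ contained all of $Q$, the thick set would be swallowed by a thin set, which is impossible.

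To make this precise I would argue by contradiction: suppose $\aA_T(u_0,h,E)\cap Q=\varnothing$ fails, i.~e.\ $Q\subset\aA_T(u_0,h,E)$. First I would fix an auxiliary regularity parameter and bound the entropy of the attainability set. Restricting to controls $\eta$ in a closed ball $B\subset W^{1,1}(J,E)$ of large radius (which still produces all of $Q$, by choosing the ball large enough and appealing to the relaxation-norm continuity), Proposition~\ref{t1.3} says the map $\eta\mapsto\RR_T(u_0,h+\eta)$ is Lipschitz from $W^{1,1}(J,E)\subset L^1(J,E)$ into $C^{s-1}$. By Lemma~\ref{l2.1} together with Proposition~\ref{p2.3},
\begin{equation*}
H_\e\bigl(\RR_T(u_0,h+B),C^{s-1}\bigr)\le H_{\e/L}(B,L^1(J,E))\prec\frac1\e\ln\frac1\e,
\end{equation*}
so the attainable set, measured in the $C^{s-1}$ metric, has $\e$-entropy of order at most $\e^{-1}\ln(1/\e)$.

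On the other hand, the ball $Q\subset C_\sigma^{s+\gamma}$ is genuinely massive when measured in the coarser $C^{s-1}$ metric. Applying Proposition~\ref{p2.2} with the pair of smoothness indices $s+\gamma$ (the regularity of $Q$) and $s-1$ (the target metric)—both non-integer and with non-integer difference, which I can arrange by an arbitrarily small perturbation of $\gamma$—and with $d=2$, gives
\begin{equation*}
H_\e\bigl(Q,C^{s-1}\bigr)\succ\Bigl(\frac1\e\Bigr)^{\frac{2}{(s+\gamma)-(s-1)}-\delta}
=\Bigl(\frac1\e\Bigr)^{\frac{2}{1+\gamma}-\delta}.
\end{equation*}
Since $\gamma<1$, the exponent $2/(1+\gamma)$ exceeds $1$, so choosing $\delta$ small enough the lower bound grows strictly faster than any power $\e^{-1}\ln(1/\e)$. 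Because $Q\subset\aA_T(u_0,h,E)$ forces $H_\e(Q,C^{s-1})\le H_\e(\aA_T(u_0,h,E),C^{s-1})$, the two bounds are incompatible as $\e\to0$, a contradiction. This establishes~\eqref{3.3}, and the final ``in particular'' assertion follows since density of $\aA_T^c(u_0,h,E)$ in $C_\sigma^{s+\gamma}$ (indeed nonemptiness of its intersection with every ball) precludes $\aA_T(u_0,h,E)\supset C_\sigma^{s+\gamma}$.

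The step I expect to be the main obstacle is the reduction of arbitrary $L^1(J,E)$ controls to a bounded ball in $W^{1,1}(J,E)$: Proposition~\ref{t1.3} controls the resolving operator in the $L^1$-relaxation norm, but the sharp entropy bound of Proposition~\ref{p2.3} is available only for $W^{1,1}$ balls, so I must verify that enlarging to a fixed $W^{1,1}$-ball of radius $R$ still covers $Q$—equivalently, that every target in $Q$ is hit by a control of controlled $W^{1,1}$-norm. The clean way is to fix $R$ first (making the entropy constant $L$ and the radius depend on $R$), observe that the part of $Q$ attainable by controls in $B_{W^{1,1}}(R)$ has the small entropy above, and note that if $Q$ were entirely attainable then for \emph{some} radius $R$ a fixed ball already yields a subset of $Q$ with non-empty interior in the $C^{s-1}$ topology, to which Proposition~\ref{p2.2} still applies. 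Handling this matching of norms carefully, rather than the entropy arithmetic itself, is where the real care is needed.
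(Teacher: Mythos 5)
Your overall strategy---Lipschitz continuity of the solution map, the two $\e$-entropy bounds, and a Baire-type localization to a single ball of controls---is exactly the paper's, and your entropy arithmetic (exponent $\frac{2}{1+\gamma}>1$ against $\e^{-1}\ln\frac1\e$) is correct. But there is a genuine gap precisely at the step you flag as delicate, and your proposed resolution does not close it. The issue is not merely ``matching norms'': the map $\eta\mapsto\RR_T(u_0,h+\eta)$ is \emph{not} Lipschitz in the metric for which Proposition~\ref{p2.3} supplies the entropy bound. The relevant metric on controls is the $L^1(J,E)$-distance between the primitives $z_i(t)=\int_0^t\eta_i\,d\tau$; this is what turns the set of controls with $\|\eta\|_{L^1(J,E)}\le n$ into a ball of $W^{1,1}(J,E)$ measured in $L^1$, so that Proposition~\ref{p2.3} applies. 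Writing $u=v+z$ as in the paper, one has $u(T)=\sS_T(z)+z(T)$; the map $z\mapsto\sS_T(z)$ is indeed Lipschitz in $\|z\|_{L^1(J,C^s)}$ by Proposition~\ref{t1.3}, but the evaluation $z\mapsto z(T)$ is not even continuous in $L^1(J,E)$ (take $z_k(t)=\max(0,1-k(T-t))\,e$ for a fixed $e\in E$: then $\|z_k\|_{L^1}\to0$ and $\|\dot z_k\|_{L^1}$ is bounded, yet $z_k(T)=e$ for all $k$). So your single Lipschitz map does not exist in this reading. In the other reading---$\eta$ itself measured in $L^1(J,E)$---the Lipschitz claim is true (Proposition~\ref{t1.3} with $z\equiv0$), but then Proposition~\ref{p2.3} forces you to restrict $\eta$, not its primitive, to a $W^{1,1}(J,E)$-ball, and such balls do not exhaust $L^1(J,E)$, so the countable union underlying your Baire step no longer covers the attainability set.

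The paper's device to repair exactly this point is to introduce the operator $K(y,z)=y+\sS_T(z)$ on the product $E\times W^{1,1}(J,E)$, treating $y$ (which plays the role of $z(T)$) as an independent variable ranging over a bounded subset of the finite-dimensional space~$E$; since $H_\e$ of a ball in $E$ is only of order $\ln\frac1\e$, the total bound $\e^{-1}\ln\frac1\e$ survives, and $K$ \emph{is} Lipschitz on $E\times L^1(J,E)$. You should incorporate this (or, equivalently, metrize the control ball by $\|z_1-z_2\|_{L^1(J,E)}+\|z_1(T)-z_2(T)\|$ and check the entropy bound persists). Two smaller omissions in your Baire step: to pass from ``$K(B_m)$ is dense in a sub-ball $\widehat Q$'' to ``$K(B_m)\supset\widehat Q$'' the paper uses that $B_m$ is compact in $E\times L^1(J,E)$ and $K$ is continuous into $C^{s-1}$, hence $K(B_m)$ is closed (alternatively, note that the $\e$-entropy of a dense subset dominates that of its closure, so containment is not strictly needed); and for $\gamma=0$ the hypothesis $s-r\notin\Z$ of Proposition~\ref{p2.2} fails for $r=s-1$, so one must perturb the exponent as you indicate---by extracting from $Q$ a smaller ball of slightly higher regularity, keeping the lower bound's exponent above~$1$.
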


\subsection{Proof of Theorem~\ref{t3.2}}
{\it Step~1}. 
We first show that it suffices to consider the case $E\subset C_\sigma^s$. Indeed, let us denote by~$\Pi$ the Leray projection, that is, 
$$
\Pi u=u-\nabla\bigl(\Delta^{-1}(\diver u)\bigr).
$$
The above relation and the continuity of $\Delta^{-1}$ in H\"older spaces (see~\cite{GT2001}) imply that~$\Pi$ is a continuous operator from~$C^s$ to~$C_\sigma^s$. It is well known that 
$$
\RR_T(u_0,f)=\RR_T(u_0,\Pi f),
$$
whence it follows that $\aA_T(u_0,h,E)=\aA_T(u_0,h,\Pi E)$. Thus, if relation~\eqref{3.3} is established for any finite-dimensional subspace $E\subset C_\sigma^s$, then it remains true in the general case. 

\medskip
{\it Step~2}. 
We now assume that $E$ is a finite-dimensional subspace in~$C_\sigma^s$.  Let us write solutions of~\eqref{3.1}, \eqref{3.2} in the form
\begin{equation} \label{3.4}
u(t,x)=v(t,x)+z(t,x), \quad z(t,x)=\int_0^t \eta(\tau,x)\,d\tau.
\end{equation}
In this case, the function~$v$ belongs to the space~$\XX_T$ and satisfies the equations
\begin{equation} \label{3.5}
\dot v+\langle v+z,\nabla\rangle (v+z)+\nabla p=h(t,x),
\quad \diver v=0, \quad
v(0,x)=u_0(x).
\end{equation}
In view of Theorem~\ref{t1.2}, for any $z\in W^{1,1}(J,E)$, problem~\eqref{3.5} has a unique solution $u\in\XX_T$. Let us denote by $\sS:W^{1,1}(J,E)\to\XX_T$ the operator that takes~$z$ to~$u$ and by~$\sS_T$ its restriction to the time~$T$. It follows from~\eqref{3.4} that we can write the solution of~\eqref{3.1}, \eqref{3.2} at the time $t=T$ in the form
\begin{equation} \label{3.6}
\RR_T(u_0,h+\eta)=z(T)+\sS_T(z),
\end{equation}
where $z$ is given by the second relation in~\eqref{3.4}. 

To prove~\eqref{3.3}, we argue by contradiction. Suppose that $\aA_T(u_0,h,E)$ contains a closed ball $Q\subset C_\sigma^{s+\gamma}$. In this case, it follows from~\eqref{3.6} that the image of the space $E\times W^{1,1}(J,E)$ under the mapping $K(y,z):=y+\sS_T(z)$ contains~$Q$. Let us write
\begin{equation} \label{3.7}
E\times W^{1,1}(J,E)=\bigcup_{n=1}^\infty B_n,
\end{equation}
where $B_n$ denotes the closed ball in $E\times W^{1,1}(J,E)$ of radius~$n$ centred at zero. Since the union of~$K(B_n)$ covers~$Q$, by the Baire theorem, there is an integer $m\ge1$ such that~$K(B_m)$ is dense in a ball~$\widehat Q\subset Q$ with respect to the metric of~$C^{s+\gamma}$. Furthermore, Proposition~\ref{t1.3} implies that the mapping~$K$ is continuous from~$E\times L^1(J,E)$ to~$C^{s-1}$. Now note~$B_m$ is compact in~$E\times L^1(J,E)$. It follows that~$K(B_m)$ is closed in~$C^{s-1}$ and, hence, $K(B_m)\cap C^{s+\gamma}$ is closed in~$C^{s+\gamma}$. Thus, $K(B_m)$ contains~$\widehat Q$. On the other hand, we shall show in the next step that 
\begin{equation} \label{3.8}
H_\e(K(B_m),C^{s-1})\prec \e^\nu H_\e(\widehat Q,C^{s-1}),
\end{equation}
where $\nu>0$. 
This contradicts the inclusion $\widehat Q\subset K(B_m)$. 

\medskip
{\it Step~3}. 
Without loss of generality, we can assume that $s+\gamma\notin\Z$. By Proposition~\ref{p2.2}, for any $\delta>0$, we have
$$
H_\e(\widehat Q,C^{s-1})\succ\Bigl(\frac1\e\Bigr)^{\frac{2}{1+\gamma}-\delta}.
$$
Let us choose $\delta>0$ so small that the exponent in the right-hand side of the above relation is bigger than~$1$. Thus, we can find $\alpha>1$ such that
\begin{equation} \label{3.9}
H_\e(\widehat Q,C^{s-1})\succ\Bigl(\frac1\e\Bigr)^\alpha. 
\end{equation}
On the other hand, let us endow~$B_m$  with the metric of $E\times L^1(J,E)$. Since~$E$ is finite-dimensional, for any ball $B\subset E$, we have (see Theorem~10.2 in~\cite{Lorentz1986})
$$
H_\e(B,E)\sim\ln\frac1\e.
$$
Combining this with Proposition~\ref{p2.3}, we see that 
\begin{equation} \label{3.10}
H_\e(B_m,E\times L^1(J,E))\prec\frac1\e\ln\frac1\e.
\end{equation}
It follows from Proposition~\ref{t1.3} that the mapping~$K$ is Lipschitz-continuous from~$B_m$ to~$C^{s-1}$. Relation~\eqref{3.10} and Lemma~\ref{l2.1} now imply that 
\begin{equation} \label{3.11}
H_\e(K(B_m),C^{s-1})\prec\frac1\e\ln\frac1\e.
\end{equation}
The required estimate~\eqref{3.8} is a consequence of~\eqref{3.9} and~\eqref{3.11}. The proof of the theorem is complete. 

\addcontentsline{toc}{section}{Bibliography}

\def\cprime{$'$} \def\cprime{$'$}
\providecommand{\bysame}{\leavevmode\hbox to3em{\hrulefill}\thinspace}
\providecommand{\MR}{\relax\ifhmode\unskip\space\fi MR }
\providecommand{\MRhref}[2]{%
  \href{http://www.ams.org/mathscinet-getitem?mr=#1}{#2}
}
\providecommand{\href}[2]{#2}

\end{document}